\documentclass[amstex,12pt,russian,amssymb]{article}

\usepackage{mathtext}
\usepackage[cp1251]{inputenc}
\usepackage[T2A]{fontenc}
\usepackage[russian]{babel}
\usepackage[dvips]{graphicx}
\usepackage{amsmath}
\usepackage{amssymb}
\usepackage{amsxtra}
\usepackage{latexsym}
\usepackage{ifthen}

\textheight250mm \textwidth165mm
\parindent5mm
\parskip0mm

\voffset-27.3mm \hoffset-11.5mm \pagestyle{myheadings}

\begin{document}

\newcounter{lemma}
\newcommand{\lemma}{\par \refstepcounter{lemma}%
{\bf Лема \arabic{lemma}.}}

\newcounter{corollary}
\newcommand{\corollary}{\par \refstepcounter{corollary}%
{\bf Наслідок \arabic{corollary}.}}

\newcounter{remark}
\newcommand{\remark}{\par \refstepcounter{remark}%
{\bf Зауваження \arabic{remark}.}}

\newcounter{theorem}
\newcommand{\theorem}{\par \refstepcounter{theorem}%
{\bf Теорема \arabic{theorem}.}}

\newcounter{proposition}
\newcommand{\proposition}{\par \refstepcounter{proposition}%
{\bf Твердження \arabic{proposition}.}}

\newcounter{example}
\newcommand{\example}{\par \refstepcounter{example}%
{\bf Приклад \arabic{example}.}}

\renewcommand{\refname}{\centerline{\bf Список літератури}}

\renewcommand{\figurename}{Мал.}

\newcommand{\proof}{{\it Доведення.\,\,}}

\noindent УДК 517.5

{\bf Е.А.~Севостьянов} (Житомирский государственный университет
имени Ивана Франко; Институт прикладной математики и механики НАН
Украины, г.~Славянск)

\medskip\medskip
{\bf Є.О.~Севостьянов} (Житомирський державний університет імені
Івана Фран\-ка; Інститут прикладної математики і механіки НАН
України, м.~Слов'янськ)

\medskip\medskip
{\bf E.A.~Sevost'yanov} (Zhytomyr Ivan Franko State University;
Institute of Applied Ma\-the\-ma\-tics and Mechanics of NAS of
Ukraine, Slov'yans'k)

\medskip
{\bf Граничное продолжение отображений с обратным неравенством
Полецкого по простым концам}

{\bf Межове продовження відображень з оберненою нерівністю
Полецького по простих кінцях}

{\bf Boundary extension of mappings with the inverse Poletsky
inequality by prime ends}

\medskip\medskip
Для отображений с ветвлением, удовлетворяющих обратному неравенству
Полецкого, получены результаты об их непрерывном граничном
продолжении в терминах простых концов. При определённых условиях
показано, что указанные классы отображений являются также
равностепенно непрерывными в замыкании заданной области.

\medskip\medskip
Для відображень із розгалуженням, які задовольняють обернену
нерівність Полецького, отримано результати про їх неперервне межове
продовження в термінах простих кінців. За певних умов вказані класи
відображень є також одностайно неперервними в замиканні заданої
області.

\medskip\medskip
For mapping with branching points that satisfy the inverse
inequality of Poletsky, we obtained the results of their continuous
boundary extension in terms of prime ends. Under certain conditions,
the specified classes od mappings are also equicontinuous in the
closure of a given domain.

\newpage
{\bf 1. Вступ.} В наших спільних роботах~\cite{SalSev} і~\cite{SSI}
отримано неперервне продовження на межу і одностайну неперервність
гомеоморфізмів, обернені до яких задовольняють певну оцінку
спотворення модуля сімей кривих. Мова йшла про області з поганими
межами, відносно яких відображення не має звичайного неперервного
продовження, але має його в узагальненому сенсі, точніше -- сенсі
так званих простих кінців. В даній замітці ми встановимо аналогічний
результат для відображень з розгалуженням, які, як правило,
припускаються відкритими, дискретними і замкненими (зберігаючими
межу області). Підкреслимо, що ситуація гомеоморфізмів, докладно
розібрана в роботах~\cite{SalSev} і~\cite{SSI}, випливає з наших
основних теорем як наслідок; в той самий час, основна умова щодо
спотворення модуля дещо більш загальна у порівнянні з~\cite{SalSev}
і \cite{SSI}. З приводу деяких відомих результатів стосовно
неперервного продовження квазіконформних відображень і їх
узагальнень по простих кінцях вкажемо, напр., на праці
\cite{GRY}--\cite{Na}.

\medskip
Наведемо деякі означення і позначення. Нехай $y_0\in {\Bbb R}^n,$
$0<r_1<r_2<\infty$ і
\begin{equation}\label{eq1**}
A(y_0, r_1,r_2)=\left\{ y\,\in\,{\Bbb R}^n:
r_1<|y-y_0|<r_2\right\}\,.\end{equation}
Якщо $f:D\rightarrow {\Bbb R}^n$ -- задане відображення, $y_0\in
f(D)$ і $0<r_1<r_2<d_0=\sup\limits_{y\in f(D)}|y-y_0|,$ то через
$\Gamma_f(y_0, r_1, r_2)$ ми позначимо сім'ю всіх кривих $\gamma$ в
області $D$ таких, що $f(\gamma)\in \Gamma(S(y_0, r_1), S(y_0, r_2),
A(y_0,r_1,r_2)).$ Нехай $Q:{\Bbb R}^n\rightarrow [0, \infty]$ --
вимірна за Лебегом функція.  Будемо говорити, що {\it $f$
задовольняє обернену нерівність Полецького} в точці $y_0\in f(D),$
якщо співвідношення
\begin{equation}\label{eq2*A}
M(\Gamma_f(y_0, r_1, r_2))\leqslant \int\limits_{f(D)\cap A(y_0,
r_1,r_2)} Q(y)\cdot \eta^n (|y-y_0|)\, dm(y)
\end{equation}
виконується для довільної вимірної за Лебегом функції $\eta:
(r_1,r_2)\rightarrow [0,\infty ]$ такій, що
\begin{equation}\label{eqA2}
\int\limits_{r_1}^{r_2}\eta(r)\, dr\geqslant 1\,.
\end{equation}
Зауважимо, що нерівності~(\ref{eq2*A}) добре відомі в теорії
квазірегулярних відображень і виконуються для них при $Q=N(f,
D)\cdot K, $ де $N(f, D)$ -- максимальна кратність відображення в
$D,$ а $K\geqslant 1$ -- деяка стала, яка може бути обчислена як
$K={\rm ess \sup}\, K_O(x, f),$ $K_O(x, f)=\Vert
f^{\,\prime}(x)\Vert^n/|J(x, f)|$ при $J(x, f)\ne 0;$ $K_O(x, f)=1$
при $f^{\,\prime}(x)=0,$ і $K_O(x, f)=\infty$ при
$f^{\,\prime}(x)\ne 0,$ але $J(x, f)=0$ (див., напр.,
\cite[теорема~3.2]{MRV$_1$} або \cite[теорема~6.7.II]{Ri}).
Відображення $f:D\rightarrow {\Bbb R}^n$ називається {\it
дискретним}, якщо прообраз $\{f^{-1}\left(y\right)\}$ кожної точки
$y\,\in\,{\Bbb R}^n$ складається з ізольованих точок, і {\it
відкритим}, якщо образ будь-якої відкритої множини $U\subset D$ є
відкритою множиною в ${\Bbb R}^n.$ Відображення $f$ області $D$ на
$D^{\,\prime}$ називається {\it замкненим}, якщо $f(E)$ є замкненим
в $D^{\,\prime}$ для будь-якої замкненої множини $E\subset D$ (див.,
напр., \cite[розд.~3]{Vu$_1$}).

\medskip
Нехай $\omega$ --  відкрита множина в ${\Bbb R}^k$,
$k=1,\ldots,n-1$. Неперервне відображення
$\sigma\colon\omega\rightarrow{\Bbb R}^n$ називається {\it
$k$-вимірною поверхнею} в ${\Bbb R}^n$. {\it Поверхнею} будемо
називати довільну $(n-1)$-вимірну поверхню $\sigma$ в ${\Bbb R}^n.$
Поверхня $\sigma$ називається {\it жордановою поверхнею}, якщо
$\sigma(x)\ne\sigma(y)$ при $x\ne y$. Далі ми іноді будемо
використовувати $\sigma$ для позначення всього образу
$\sigma(\omega)\subset {\Bbb R}^n$ при відображенні $\sigma$,
$\overline{\sigma}$ замість $\overline{\sigma(\omega)}$ в ${\Bbb
R}^n$ і $\partial\sigma$ замість
$\overline{\sigma(\omega)}\setminus\sigma(\omega)$. Жорданова
поверхня $\sigma\colon\omega\rightarrow D$ в області $D$ називається
{\it розрізом} області $D$, якщо $\sigma$ розділяє $D$, тобто
$D\setminus \sigma$ має більше однієї компоненти,
$\partial\sigma\cap D=\varnothing$ і $\partial\sigma\cap\partial
D\ne\varnothing$.

Послідовність $\sigma_1,\sigma_2,\ldots,\sigma_m,\ldots$ розрізів
області $D$ називається {\it ланцюгом}, якщо:

(i) множина $\sigma_{m+1}$ міститься в точності в одній компоненті
$d_m$ множини $D\setminus \sigma_m$, при цьому, $\sigma_{m-1}\subset
D\setminus (\sigma_m\cup d_m)$; (ii)
$\bigcap\limits_{m=1}^{\infty}\,d_m=\varnothing$. З означення
ланцюгу розрізів випливає, що $d_1\supset d_2\supset
d_3\supset\ldots\supset d_{m-1}\supset d_m\supset
d_{m+1}\supset\ldots\,.$
Два ланцюги розрізів $\{\sigma_m\}$ і $\{\sigma_k^{\,\prime}\}$
називаються {\it еквівалентними}, якщо для кожного $m=1,2,\ldots$
область $d_m$ містить всі області $d_k^{\,\prime}$ за виключенням
скінченної кількості, і для кожного $k=1,2,\ldots$ область
$d_k^{\,\prime}$ також містить всі області $d_m$ за виключенням
скінченної кількості.

{\it Кінець} області $D$ --- це клас еквівалентних ланцюгів розрізів
області $D$. Нехай $K$ --- кінець області $D$ в ${\Bbb R}^n$, тоді
множина $I(K)=\bigcap\limits_{m=1}\limits^{\infty}\overline{d_m}$
називається {\it тілом кінця} $K$. Скрізь далі, як зазвичай,
$\Gamma(E, F, D)$ позначає сім'ю всіх таких кривих $\gamma\colon[a,
b]\rightarrow D$, що $\gamma(a)\in E$ і $\gamma(b)\in F,$ крім того,
$M(\Gamma)$ позначає модуль сім'ї кривих $\Gamma$ в ${\Bbb R}^n,$ а
запис $\rho\in {\rm adm}\,\Gamma$ означає, що функція $\rho$
борелева, невід'ємна і має довжину, не меншу ніж одиницю, в метриці
$\rho$ (див.~\cite{Na}, \cite{Va}). Слідуючи~\cite{Na}, будемо
говорити, що кінець $K$ є {\it простим кінцем}, якщо $K$ містить
ланцюг розрізів $\{\sigma_m\}$, такий, що $M(\Gamma(\sigma_m,
\sigma_{m+1}, D))<\infty$ при всіх $m\in {\Bbb N}$ і $
\lim\limits_{m\rightarrow\infty}M(\Gamma(C, \sigma_m, D))=0 $
для деякого континууму $C$ в $D$ (див. малюнок~\ref{fig1}).
\begin{figure}
\centering\includegraphics[scale=0.5]{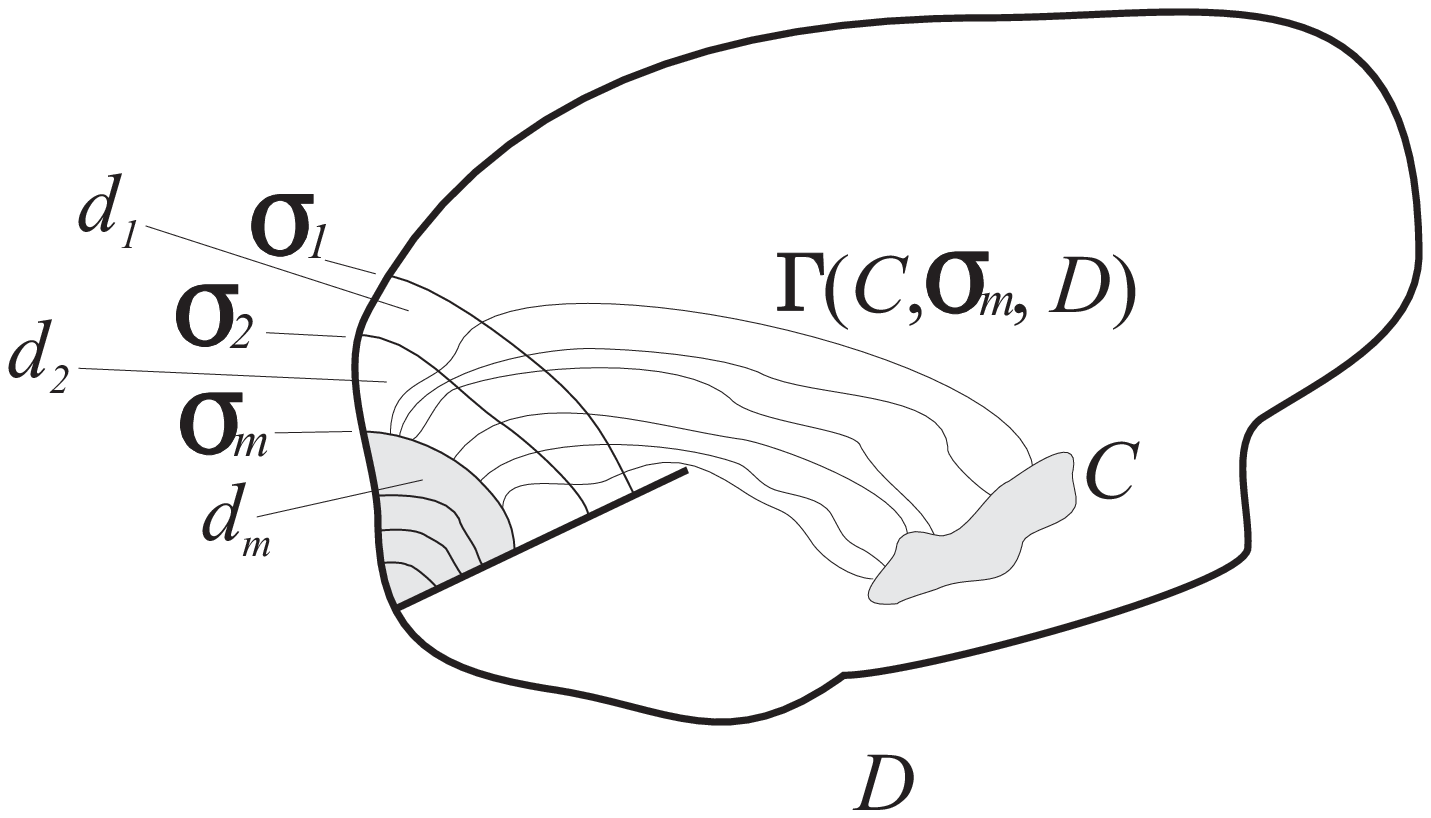}
\caption{Простий кінець в області}\label{fig1}
 \end{figure}
Далі використовуються наступні позначення: множина простих кінців,
що відповідають області $D,$ позначається символом $E_D,$ а
поповнення області $D$ її простими кінцями позначається
$\overline{D}_P.$ Будемо говорити, що межа області $D$ в ${\Bbb
R}^n$ є {\it локально квазіконформною}, якщо кожна точка
$x_0\in\partial D$ має окіл $U$ в ${\Bbb R}^n$, який може бути
відображений квазіконформним відображенням $\varphi$ на одиничну
кулю ${\Bbb B}^n\subset{\Bbb R}^n$ так, що $\varphi(\partial D\cap
U)$ є перетином ${\Bbb B}^n$ з координатною гіперплощиною.
Розглянемо також наступне означення (див.~\cite{KR$_1$},
\cite{KR$_2$}). Для множин $E\subset {\Bbb R}^n$ і $A, B\subset
{\Bbb R}^n$ покладемо
$$d(E):=\sup\limits_{x, y\in E}|x-y|\,,\quad d(A, B):=\inf\limits_{x\in A, y\in B}|x-y|\,.$$
Будемо називати ланцюг розрізів $\{\sigma_m\}$ {\it регулярним},
якщо $\overline{\sigma_m}\cap\overline{\sigma_{m+1}}=\varnothing$
при кожному $m\in {\Bbb N}$ і, крім того, $d(\sigma_{m})\rightarrow
0$ при $m\rightarrow\infty.$ Якщо кінець $K$ містить принаймні один
регулярний ланцюг, то $K$ будемо називати {\it регулярним}.
Говоримо, що обмежена область $D$ в ${\Bbb R}^n$ {\it регулярна},
якщо $D$ може бути квазіконформно відображена на область з локально
квазіконформною межею, замикання якої є компактом в ${\Bbb R}^n,$
крім того, кожен простий кінець $P\subset E_D$ є регулярним.
Зауважимо, що у просторі ${\Bbb R}^n$ кожний простий кінець
регулярної області містить ланцюг розрізів з властивістю
$d(\sigma_{m})\rightarrow 0$ при $m\rightarrow\infty,$ і навпаки,
якщо у кінця є вказана властивість, то він -- простий
(див.~\cite[теорема~5.1]{Na}). Крім того, замикання $\overline{D}_P$
регулярної області $D$ є {\it метризовним}, при цьому, якщо
$g:D_0\rightarrow D$ -- квазіконформне відображення області $D_0$ з
локально квазіконформною межею на область $D,$ то для $x, y\in
\overline{D}_P$ покладемо:
\begin{equation}\label{eq1A}
\rho(x, y):=|g^{\,-1}(x)-g^{\,-1}(y)|\,,
\end{equation}
де для $x\in E_D$ елемент $g^{\,-1}(x)$ розуміється як деяка (єдина)
точка межі $D_0,$ коректно визначена з огляду
на~\cite[теорема~4.1]{Na}. Зокрема, будемо говорити, що
послідовність $x_m\in D,$ $m=1,2,\ldots,$ {\it збігається} до
простого кінця $P\in E_D$ при $m\rightarrow\infty,$ якщо для
будь-якого натурального $k\in {\Bbb N}$ всі елементи послідовності
$x_m,$ крім скінченної кількості, належать області $d_k$ (де $d_k,$
$k=1,2,\ldots$ -- послідовність вкладених областей з означення
простого кінця $P$). Якщо, наприклад, $f$ -- гомеоморфізм області
$D$ на $D^{\,\prime},$ то не важко переконатися, що між кінцями
областей $D$ і $D^{\,\prime}=f(D)$ є взаємно однозначна
відповідність (див. малюнок~\ref{fig2}).
\begin{figure}[h]
\centering\includegraphics[width=300pt]{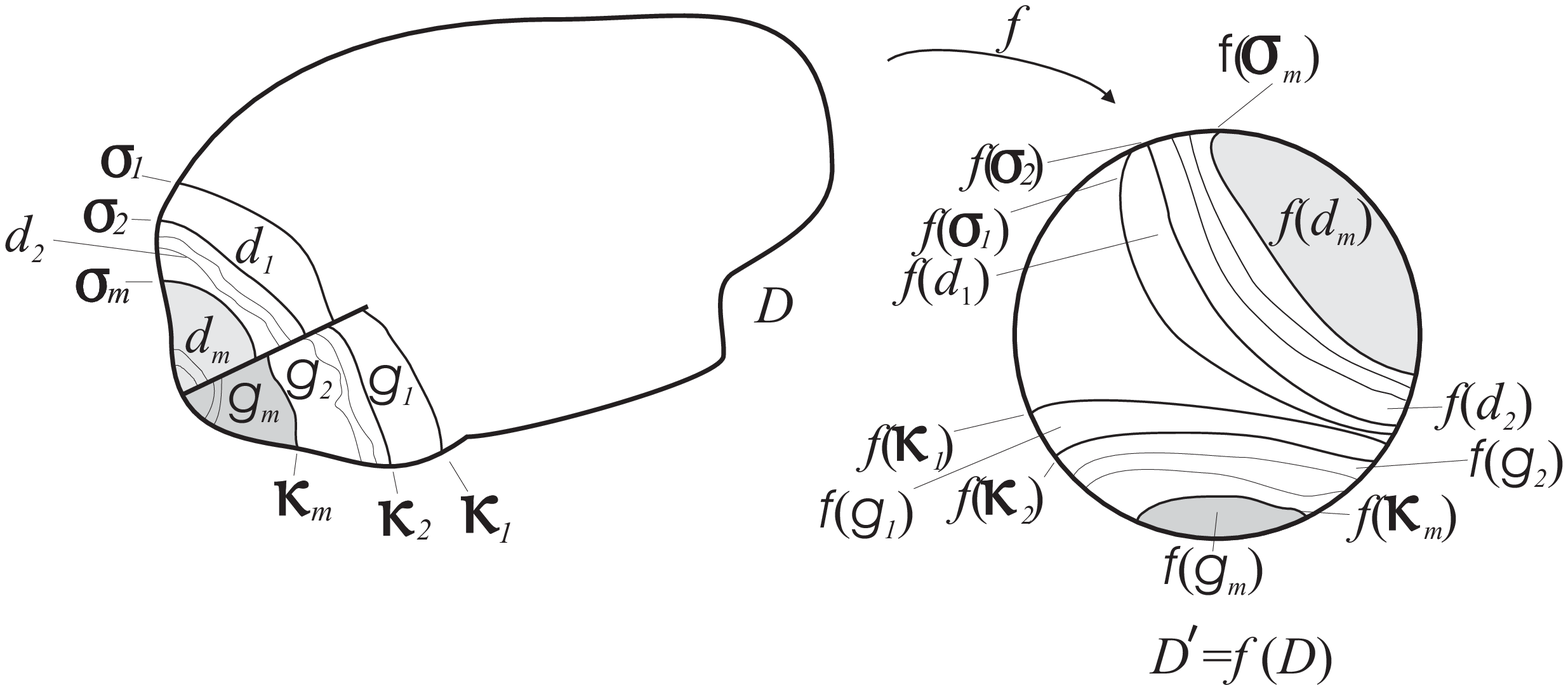}
\caption{Відповідність простих кінців при відображенні}\label{fig2}
\end{figure}
Є справедливим такий результат.

\medskip
\begin{theorem}\label{th3}
{\sl Нехай $D\subset {\Bbb R}^n,$ $n\geqslant 2,$ -- область, яка
має слабо плоску межу, а область $D^{\,\prime}\subset {\Bbb R}^n$ є
регулярною. Припустимо, $f$ -- відкрите дискретне і замкнене
відображення області $D$ на $D^{\,\prime},$ що задовольняє
співвідношення~(\ref{eq2*A}) в кожній точці $y_0\in D^{\,\prime},$
де $Q\in L^1(D^{\,\prime}).$ Тоді відображення $f$ має неперервне
продовження до відображення
$\overline{f}:\overline{D}\rightarrow\overline{D^{\,\prime}}_P,$
причому, $\overline{f}(\overline{D})=\overline{D^{\,\prime}}_P.$}
\end{theorem}

\medskip
Справедливим також є результат про одностайну неперервність сімей
відображень виду~(\ref{eq2*A}) в замиканні даної області. З метою
формулювання відповідного результату, розглянемо такі означення.

\medskip
Межа області $D$ називається {\it слабо плоскою} в точці $x_0\in
\partial D,$ якщо для кожного $P>0$ і для будь-якого околу $U$
точки $x_0$ знайдеться окіл $V\subset U$ цієї ж самої точки такий,
що $M(\Gamma(E, F, D))>P$ для будь-яких континуумів $E, F\subset D,$
які перетинають $\partial U$ і $\partial V.$ Межа області $D$
називається слабо плоскою, якщо відповідна властивість виконується в
будь-якій точці межі $D.$ Нехай $h$ -- хордальна відстань в
$\overline{{\Bbb R}^n}$ (див., напр., означення~12.1 в~\cite{Va}). У
подальшому, для множин $A, B\subset \overline{{\Bbb R}^n}$ покладемо
$$h(A, B)=\inf\limits_{x\in A, y\in B}h(x, y)\,,
\quad h(A)=\sup\limits_{x, y\in A}h(x ,y)\,,$$
де $h$ -- хордальная відстань. Для числа $\delta>0,$ областей $D,
D^{\,\prime}\subset {\Bbb R}^n,$ $n\geqslant 2,$ континуума
$A\subset D^{\,\prime}$ і довільної вимірної за Лебегом функції
$Q:D^{\,\prime}\rightarrow [0, \infty]$ позначимо через ${\frak
S}_{\delta, A, Q }(D, D^{\,\prime})$ сім'ю всіх відкритих дискретних
і замкнених відображень $f$ області $D$ на $D^{\,\prime},$ що
задовольняють умову~(\ref{eq2*A}) для кожного $y_0\in D^{\,\prime}$
і таких, що $h(f^{\,-1}(A),
\partial D)\geqslant~\delta.$  Виконується
наступне твердження.

\medskip
\begin{theorem}\label{th2}
{\sl Припустимо, що область $D$ має слабо плоску межу. Якщо $Q\in
L^1(D^{\,\prime}),$ і область $D^{\,\prime}$ є регулярною, то
будь-яке $f\in{\frak S}_{\delta, A, Q }(D, D^{\,\prime})$ неперервно
продовжується до відображення $\overline{f}:\overline{D}\rightarrow
\overline{D^{\,\prime}}_P,$ причому,
$\overline{f}(\overline{D})=\overline{D^{\,\prime}}_P$ і сім'я
${\frak S}_{\delta, A, Q }(\overline{D}, \overline{D^{\,\prime}}),$
яка складається з усіх продовжених відображень
$\overline{f}:\overline{D}\rightarrow \overline{D^{\,\prime}}_P,$
одностайно неперервна в $\overline{D}.$ }
\end{theorem}

\medskip
Зауважимо, що для випадку гарних меж твердження теорем~\ref{th3} і
\ref{th2} доведено раніше в роботі~\cite{SSD} (див. теореми~1.1 і
1.2).

\medskip
{\bf 2. Доведення теореми~\ref{th3}.} Зафіксуємо довільним чином
точку $x_0\in\partial D.$ Необхідно показати можливість неперервного
продовження відображення $f$ в точку $x_0.$ Використовуючи при
необхідності мебіусове перетворення $\varphi:\infty\mapsto 0$ і
враховуючи інваріантість модуля $M$ в лівій частині
співвідношення~(\ref{eq2*A}) (див.~\cite[теорема~8.1]{Va}), ми
можемо вважати, що $x_0\ne\infty.$

\medskip
Припустимо, що висновок про неперервне продовження відображення $f$
в точку $x_0$ не є правильним. Тоді будь-який простий кінець $P_0\in
E_{D^{\,\prime}}$ не є границею $f$ в точці $x_0,$ тобто, знайдеться
послідовність $x_k\rightarrow x_0$ при $k\rightarrow\infty$ і число
$\varepsilon_0>0$ такі, що $\rho(f(x_k), P_0)\geqslant
\varepsilon_0$ при всіх $k\in {\Bbb N},$ де $\rho$ -- одна з метрик
в~(\ref{eq1A}). Оскільки за умовою область $D^{\,\prime}$ є
регулярною, її можна відобразити на обмежену область $D_*$ за
допомогою деякого квазіконформного відображення $h:
D^{\,\prime}\rightarrow D_*.$ Оскільки між точками межі областей з
локально квазіконформними межами і їх простими кінцями є взаємно
однозначна відповідність (див.~\cite[теорема~4.1]{Na}), і за умовою
$\overline{D}_*$ є компактом в ${\Bbb R}^n,$ метричний простір
$(\overline{D^{\,\prime}}_P, \rho)$ є компактним. Отже, можна
вважати, що $f(x_k)$ збігається до якогось елементу $P_1\ne P_0,$
$P_1\in\overline{D^{\,\prime}}_P$ при $k\rightarrow\infty.$ Оскільки
за припущенням відображення $f$ не має границі в точці $x_0,$ існує
принаймні ще одна послідовність $y_k\rightarrow x_0$ при
$k\rightarrow\infty,$ така що $\rho(f(y_k), P_1)\geqslant
\varepsilon_1$ при всіх $k\in {\Bbb N}$ і деякому $\varepsilon_1>0.$
Знову таки, оскільки метричний простір $(\overline{D^{\,\prime}}_P,
\rho)$ є компактним, ми можемо вважати, що $f(y_k)\rightarrow P_2$
при $k\rightarrow \infty,$ $P_1\ne P_2,$ $P_2\in
\overline{D^{\,\prime}}_P.$ Оскільки відображення $f$ замкнене, воно
зберігає межу області, див.~\cite[теорема~3.3]{Vu$_1$}. Отже, $P_1,
P_2\in E_{D^{\,\prime}}.$

\medskip
Нехай $\sigma_m$ і $\sigma^{\,\prime}_m,$ $m=0,1,2,\ldots, $  --
послідовності розрізів, які відповідають простим кінцям $P_1$ і
$P_2,$ відповідно. Нехай також розрізи $\sigma_m,$ $m=0,1,2,\ldots,
$ лежать на сферах $S(z_0, r_m)$ з центром в деякій точці $z_0\in
\partial D^{\,\prime},$ де $r_m\rightarrow 0$ при $m\rightarrow\infty$
(така послідовність $\sigma_m$ існує за~\cite[лема~3.1]{IS}, див.
також~\cite[лема~1]{KR$_2$}). Нехай $d_m$ і $g_m,$ $m=0,1,2,\ldots,
$ -- відповідні послідовності областей в $D^{\,\prime},$ що
відповідають розрізам $\sigma_m$ і $\sigma^{\,\prime}_m,$
відповідно. Оскільки простір $(\overline{D^{\,\prime}}_P, \rho)$ є
метричним, можна вважати, що всі $d_m$ і $g_m$ не перетинаються між
собою для кожного $m=0,1,2,\ldots ,$ зокрема,
\begin{equation}\label{eq4}
d_0\cap g_0=\varnothing\,.
\end{equation}
Оскільки $f(x_k)$ збігається до $P_1$ при $k\rightarrow\infty,$ для
кожного $m\in {\Bbb N}$ існує $k=k(m):$ $f(x_k)\in d_m$ при
$k\geqslant k=k(m).$ Шляхом перенумерації послідовності $x_k$ в разі
необхідності, ми можемо добитися того, щоб $f(x_k)\in d_k$ при
кожному натуральному $k.$ Аналогічно, можна вважати, що $f(y_k)\in
g_k$ при всіх $k\in {\Bbb N}.$ Зафіксуємо точки $f(x_1)$ і $f(y_1).$
Оскільки за означенням простого кінця
$\bigcap\limits_{k=1}^{\infty}d_k=\bigcap\limits_{l=1}^{\infty}g_l=\varnothing,$
існують номери $k_1$ і $k_2\in {\Bbb N}$ такі, що $f(x_1)\not\in
d_{k_1}$ і $f(y_1)\not \in g_{k_2}.$ Оскільки за означенням
послідовності областей $d_k\subset d_{k_0}$ при всіх $k\geqslant
k_1$ і $g_k\subset g_{k_2}$ при $k\geqslant k_2,$ будемо мати
\begin{equation}\label{eq3}
f(x_1)\not\in d_k\,,\quad f(y_1)\not\in g_k\,, \quad
k\geqslant\max\{k_1, k_2\}\,.
\end{equation}

\medskip
Нехай $\gamma_k$ -- крива, що з'єднує $f(x_1)$ і $f(x_k)$ в області
$d_1,$ а $\gamma^{\,\prime}_k$ -- крива, що з'єднує $f(y_1)$ і
$f(y_k)$ в області $g_1.$ Нехай також $\alpha_k$ і $\beta_k$ --
повні $f$-підняття кривих $\gamma_k$ і $\gamma^{\,\prime}_k$ в
області $D$ з початками в точках $x_k$ і $y_k,$ відповідно (такі
підняття існують за~\cite[лема~3.7]{Vu$_1$}),
див.~малюнок~\ref{fig1A}).
\begin{figure}[h]
\centerline{\includegraphics[scale=0.5]{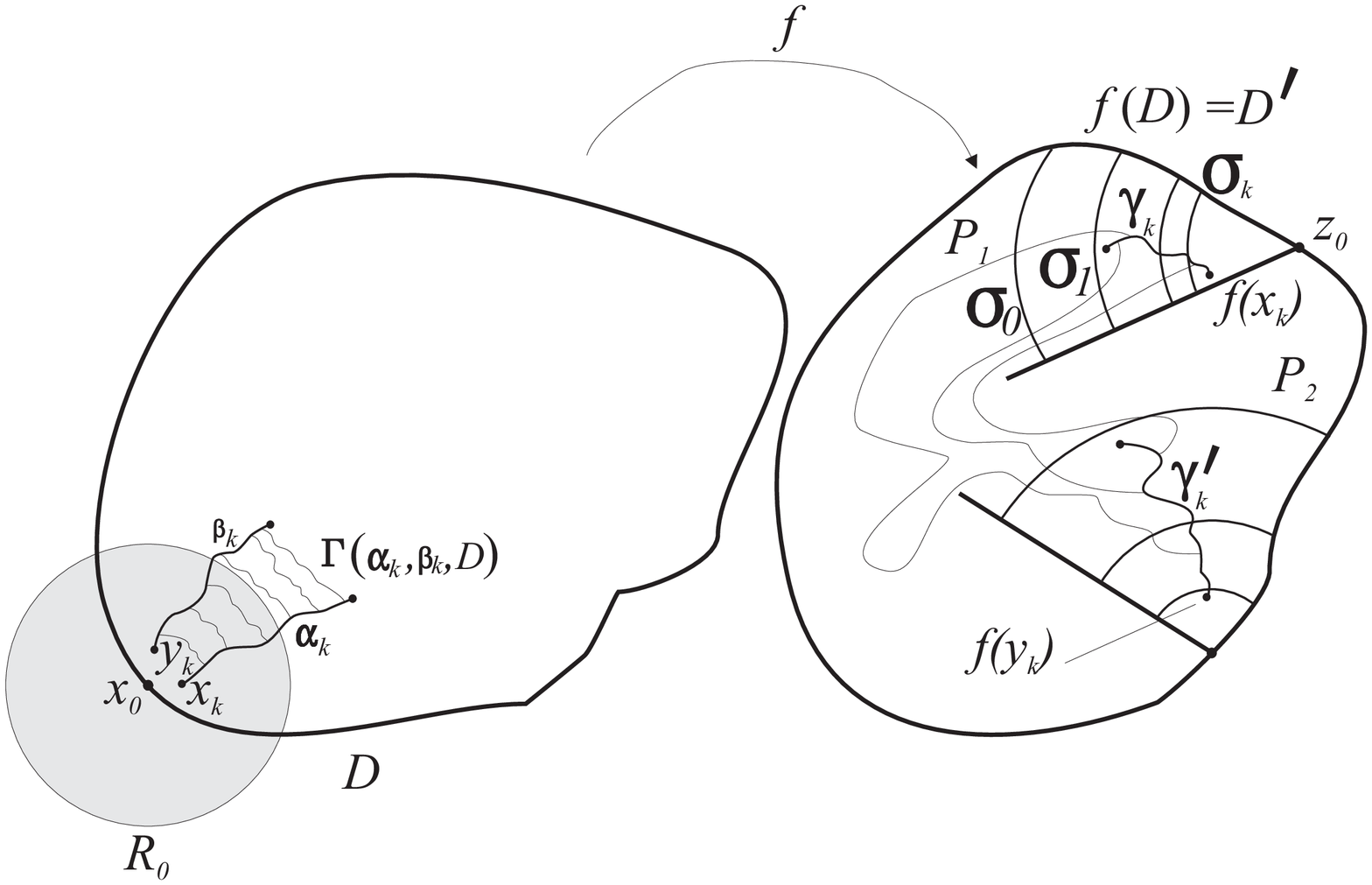}} \caption{До
доведення теореми~\ref{th3}}\label{fig1A}
\end{figure}
Зауважимо, що у точок $f(x_1)$ і $f(y_1)$ в області $D$ може бути не
більше скінченного числа прообразів при відображенні $f,$
див.~\cite[лема~3.2]{Vu$_1$}. Тоді знайдеться $R_0>0$ таке, що
$\alpha_k(1), \beta_k(1)\in D\setminus B(x_0, R_0)$ при всіх
$k=1,2,\ldots .$ Оскільки межа області $D$ є слабо плоскою, для
кожного $P>0$ знайдеться $k=k_P\geqslant 1$ таке, що
\begin{equation}\label{eq7}
M(\Gamma(|\alpha_k|, |\beta_k|, D))>P\qquad\forall\,\,k\geqslant
k_P\,.
\end{equation}
Покажемо, що умова~(\ref{eq7}) суперечить визначенню відображення
$f$ в~(\ref{eq2*A}). Справді, нехай $\gamma\in \Gamma(|\alpha_k|,
|\beta_k|, D),$ тоді $\gamma:[0, 1]\rightarrow D,$ $\gamma(0)\in
|\alpha_k|$ і $\gamma(0)\in |\beta_k|.$ Зокрема, $f(\gamma(0))\in
|\gamma_k|$ і $f(\gamma(1))\in |\gamma^{\,\prime}_k|.$ В такому
випадку, зі співвідношень~(\ref{eq4}) і~(\ref{eq7}) випливає, що
$|f(\gamma)|\cap d_1\ne\varnothing \ne |f(\gamma)|\cap(D\setminus
d_1)$ при $k\geqslant\max\{k_1, k_2\}.$ З огляду
на~\cite[теорема~1.I.5.46]{Ku} $|f(\gamma)|\cap \partial
d_1\ne\varnothing,$ тобто, $|f(\gamma)|\cap S(z_0,
r_1)\ne\varnothing,$ бо $\partial d_1\cap D\subset \sigma_1\subset
S(z_0, r_1)$ за визначенням розрізу $\sigma_1.$ Нехай $t_1\in (0,1)$
таке, що $f(\gamma(t_1))\in S(z_0, r_1)$ і
$f(\gamma)|_1:=f(\gamma)|_{[t_1, 1]}.$ Без обмеження загальності
можна вважати, що $f(\gamma)|_1\subset {\Bbb R}^n\setminus B(z_0,
r_1).$ Міркуючи так само для кривої $f(\gamma)|_1,$ можна знайти
точку $t_2\in (t_1,1)$ таку, що $f(\gamma(t_2))\in S(z_0, r_0).$
Покладемо $f(\gamma)|_2:=f(\gamma)|_{[t_1, t_2]}.$ Тоді крива
$f(\gamma)|_2$ є підкривою кривої $f(\gamma)$ і, крім того,
$f(\gamma)|_2\in \Gamma(S(z_0, r_1), S(z_0, r_0), D^{\,\prime}).$
Без обмеження загальності можна вважати, що $f(\gamma)|_2\subset
B(z_0, r_0).$ Тим самим
$$\Gamma(|\alpha_k|,
|\beta_k|, D)>\Gamma_f(z_0, r_1, r_0)\,.$$
З останнього співвідношення по міноруванню модуля (див., напр.,
\cite[теорема~1(c)]{Fu})
\begin{equation}\label{eq5}
M(\Gamma(|\alpha_k|, |\beta_k|, D))\leqslant M(\Gamma_f(z_0, r_1,
r_0))\,.
\end{equation}
Покладемо $\eta(t)= \left\{
\begin{array}{rr}
\frac{1}{r_0-r_1}, & t\in [r_1, r_0],\\
0,  &  t\not\in [r_1, r_0]
\end{array}
\right. .$
Зауважимо, що $\eta$ задовольняє співвідношення~(\ref{eqA2}) при
$r_1:=r_1$ і $r_2:=r_0.$ Тоді з~(\ref{eq2*A}) і~(\ref{eq5}) ми
отримаємо, що
\begin{equation}\label{eq11}
M(\Gamma(|\alpha_k|, |\beta_k|, D)) \leqslant
\frac{1}{(r_0-r_1)^n}\int\limits_{D^{\,\prime}}
Q(y)\,dm(y):=c<\infty\quad\forall k\geqslant \max\{k_1, k_2\}\,,
\end{equation}
оскільки~$Q\in L^1(D).$ Співвідношення~(\ref{eq11}) суперечить
умові~(\ref{eq7}). Отримана суперечність спростовує припущення про
відсутність границі у відображення $f$ в точці $x_0.$

Залишилось перевірити рівність
$\overline{f}(\overline{D})=\overline{D^{\,\prime}}_P.$ Очевидно, що
$\overline{f}(\overline{D})\subset\overline{D^{\,\prime}}_P.$
Покажемо, що $\overline{D^{\,\prime}}_P\subset
\overline{f}(\overline{D}).$ Справді, нехай $y_0\in
\overline{D^{\,\prime}}_P,$ тоді або $y_0\in D^{\,\prime},$ або
$y_0\in \partial E_{D^{\,\prime}}.$ Якщо $y_0\in D^{\,\prime},$ то
$y_0=f(x_0)$ і $y_0\in \overline{f}(\overline{D}),$ оскільки за
умовою $f$ -- відображення області $D$ на $D^{\,\prime}.$ Нарешті,
нехай $y_0\in E_{D^{\,\prime}},$ тоді через регулярність
області~$D^{\,\prime}$ знайдеться послідовність $y_k\in
D^{\,\prime}$ така, що $\rho(y_k, y_0)\rightarrow 0$ при
$k\rightarrow\infty,$ $y_k=f(x_k)$ і $x_k\in D,$ де $\rho$ -- одна з
можливих метрик в $\overline{D^{\,\prime}}_P.$ Через компактність
простору $\overline{{\Bbb R}^n}$ ми можемо вважати, що
$x_k\rightarrow x_0,$ де $x_0\in\overline{D}.$ Помітимо, що $x_0\in
\partial D,$ оскільки відображення $f$ є відкритим. Тоді
$f(x_0)=y_0\in \overline{f}(\partial D)\subset
\overline{f}(\overline{D}).$ Теорема повністю доведена.

\medskip
{\bf 3. Допоміжні леми.} Наступну лему доведено
в~\cite[лема~2.1]{SSI$_1$}, див. також~\cite[лема~2.1]{SSI}.

\begin{lemma}\label{lem1}{\sl\,
Нехай $D^{\,\prime}\subset {\Bbb R}^n,$ $n\geqslant 2,$ -- регулярна
область, і нехай $x_m\rightarrow P_1,$ $y_m\rightarrow P_2$ при
$m\rightarrow\infty,$ $P_1, P_2\in \overline{D^{\,\prime}}_P,$
$P_1\ne P_2.$ Припустимо, що $d_m, g_m,$ $m=1,2,\ldots,$ -- дві
послідовності спадних областей, які відповідають $P_1$ і $P_2,$
$d_1\cap g_1=\varnothing,$ і $x_0, y_0\in D^{\,\prime}\setminus
(d_1\cup g_1).$ Тоді існують як завгодно великі номери $k_0\in {\Bbb
N},$ $M_0=M_0(k_0)\in {\Bbb N}$ і $0<t_1=t_1(k_0), t_2=t_2(k_0)<1$
для яких виконано наступну умову: для всякого $m\geqslant M_0$
знайдуться непересічні криві
$$\gamma_{1,m}(t)=\quad\left\{
\begin{array}{rr}
\widetilde{\alpha}(t), & t\in [0, t_1],\\
\widetilde{\alpha_m}(t), & t\in [t_1, 1]\end{array}
\right.\,,\quad\gamma_{2,m}(t)=\quad\left\{
\begin{array}{rr}
\widetilde{\beta}(t), & t\in [0, t_2],\\
\widetilde{\beta_m}(t), & t\in [t_2, 1]\end{array}\,, \right.$$
такі, що

1) $\gamma_{1, m}(0)=x_0,$ $\gamma_{1, m}(1)=x_m,$ $\gamma_{2,
m}(0)=y_0$ і $\gamma_{2, m}(1)=y_m;$

2) $|\gamma_{1, m}|\cap \overline{g_{k_0}}=\varnothing=|\gamma_{2,
m}|\cap \overline{d_{k_0}};$

3) $\widetilde{\alpha_m}(t)\in d_{k_0+1}$ при $t\in [t_1, 1]$ і
$\widetilde{\beta_m}(t)\in g_{k_0+1}$ при $t\in [t_2, 1]$ (див.
малюнок~\ref{fig3}).
\begin{figure}[h]
\centering\includegraphics[width=180pt]{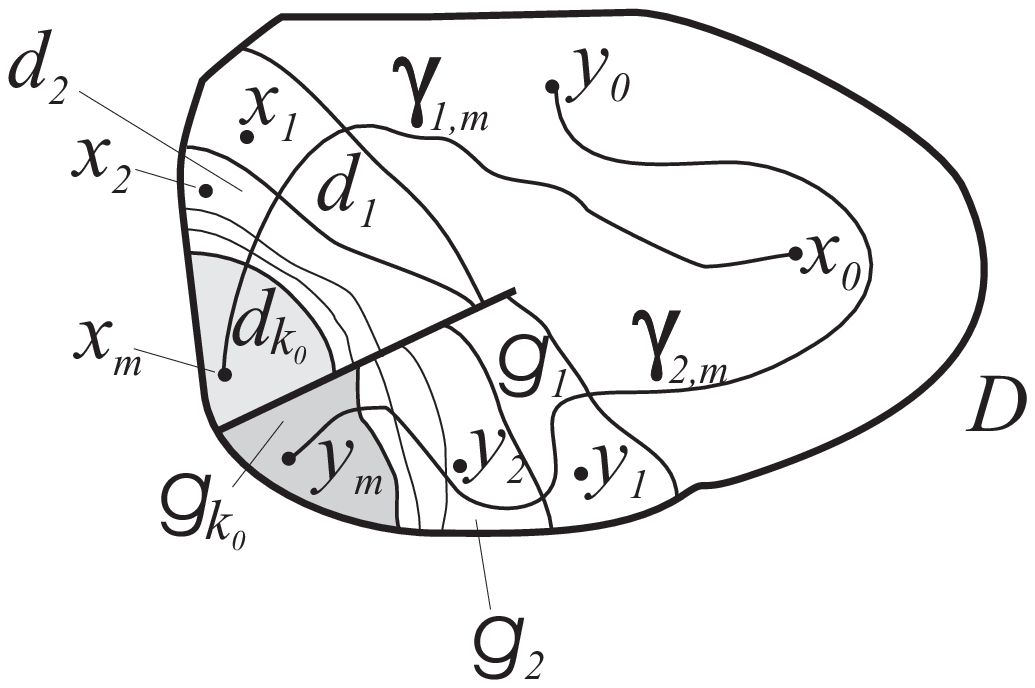}
\caption{Твердження леми~\ref{lem1}}\label{fig3}
\end{figure}
}
\end{lemma}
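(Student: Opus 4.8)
The plan is to produce each of $\gamma_{1,m}$ and $\gamma_{2,m}$ as a concatenation of a \emph{fixed} ``outer'' arc with an $m$-dependent ``inner'' arc confined to one of the nested domains, so that everything reduces to elementary path-connectedness once the combinatorics of the two chains of cuts is unpacked. First I would recall that structure. Let $\{\sigma_m^{d}\}$ and $\{\sigma_m^{g}\}$ be the chains of cuts generating $\{d_m\}$ and $\{g_m\}$, so that $d_m$ (resp. $g_m$) is the component of $D^{\,\prime}\setminus\sigma_m^{d}$ (resp. $D^{\,\prime}\setminus\sigma_m^{g}$) containing the tail of the chain; then $d_1\supset d_2\supset\ldots$, $g_1\supset g_2\supset\ldots$, $\sigma_m^{g}\subset g_{m-1}$, $\sigma_m^{d}\subset d_{m-1}$, while $\sigma_{m-1}^{g}\subset D^{\,\prime}\setminus(\sigma_m^{g}\cup g_m)$ and $\sigma_{m-1}^{d}\subset D^{\,\prime}\setminus(\sigma_m^{d}\cup d_m)$. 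Since a cut divides $D^{\,\prime}$ into at most two subdomains and these last inclusions show $D^{\,\prime}\setminus(\sigma_m^{g}\cup g_m)\ne\varnothing$ for $m\geqslant 2$, the set $D^{\,\prime}\setminus\sigma_m^{g}$ has exactly two components for $m\geqslant 2$; I denote by $V_m$ the one distinct from $g_m$, so $D^{\,\prime}\setminus\sigma_m^{g}=g_m\sqcup V_m$, and define $W_m$ analogously for the $d$-chain.

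Next I fix $k_0$ and carry out the separation bookkeeping, which is the step requiring care. Take $k_0\geqslant 2$. Then $\sigma_{k_0}^{g}\subset g_{k_0-1}\subset g_1$ and $\sigma_{k_0}^{d}\subset d_{k_0-1}\subset d_1$; since $d_1\cap g_1=\varnothing$ and $x_0,y_0\notin d_1\cup g_1$, it follows that $x_0,y_0$ lie in neither $g_{k_0}\cup\sigma_{k_0}^{g}$ nor $d_{k_0}\cup\sigma_{k_0}^{d}$, hence $x_0\in V_{k_0}$ and $y_0\in W_{k_0}$. Likewise $d_{k_0+1}\ (\subset d_1)$ is a subdomain disjoint from $g_{k_0}$ and from $\sigma_{k_0}^{g}$, so being connected it lies in $V_{k_0}$; dually $g_{k_0+1}\subset W_{k_0}$. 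Finally, any point of $\overline{g_{k_0}}\cap D^{\,\prime}$ lying in the open set $V_{k_0}$ would have a whole neighbourhood disjoint from $g_{k_0}$, which is absurd; hence $\overline{g_{k_0}}\cap D^{\,\prime}\subset g_{k_0}\cup\sigma_{k_0}^{g}$, and therefore $(V_{k_0}\cup d_{k_0+1})\cap\overline{g_{k_0}}=\varnothing$, and symmetrically $(W_{k_0}\cup g_{k_0+1})\cap\overline{d_{k_0}}=\varnothing$.

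It then remains to assemble the curves. By the definition of convergence to a prime end there is $M_0=M_0(k_0)\in{\Bbb N}$ with $x_m\in d_{k_0+1}$ and $y_m\in g_{k_0+1}$ for all $m\geqslant M_0$. I fix points $w_1\in d_{k_0+1}$, $w_2\in g_{k_0+1}$, numbers $t_1=t_2=1/2\in(0,1)$, and, using that $V_{k_0}$ and $W_{k_0}$ are connected open subsets of ${\Bbb R}^n$, arcs $\widetilde{\alpha}\colon[0,t_1]\to V_{k_0}$ joining $x_0$ to $w_1$ and $\widetilde{\beta}\colon[0,t_2]\to W_{k_0}$ joining $y_0$ to $w_2$ (independent of $m$). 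For each $m\geqslant M_0$, since $d_{k_0+1}$ (resp. $g_{k_0+1}$) is connected and contains $w_1,x_m$ (resp. $w_2,y_m$), I choose arcs $\widetilde{\alpha_m}\colon[t_1,1]\to d_{k_0+1}$ from $w_1$ to $x_m$ and $\widetilde{\beta_m}\colon[t_2,1]\to g_{k_0+1}$ from $w_2$ to $y_m$. The resulting concatenations $\gamma_{1,m}$ and $\gamma_{2,m}$ are continuous, satisfy 1) and 3) by construction, and satisfy 2) because $|\gamma_{1,m}|\subset V_{k_0}\cup d_{k_0+1}$ is disjoint from $\overline{g_{k_0}}$ while $|\gamma_{2,m}|\subset W_{k_0}\cup g_{k_0+1}$ is disjoint from $\overline{d_{k_0}}$. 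The only genuinely delicate point is the one flagged above: extracting from the axioms of a chain of cuts the ``large'' complementary components $V_{k_0}$, $W_{k_0}$ together with the separation relations $(V_{k_0}\cup d_{k_0+1})\cap\overline{g_{k_0}}=\varnothing$ and its dual — once these are in hand, the construction of the two curves is immediate.
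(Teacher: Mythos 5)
You should first note that the paper itself gives no proof of this lemma --- it is imported from \cite{SSI$_1$} (Lemma~2.1) --- so your argument has to stand entirely on its own, and it contains one genuine gap. The whole construction of the fixed outer arcs $\widetilde{\alpha}$, $\widetilde{\beta}$ rests on your assertion that a cut divides $D^{\,\prime}$ into \emph{at most two} subdomains, so that $D^{\,\prime}\setminus\sigma^{g}_{k_0}=g_{k_0}\sqcup V_{k_0}$ with $V_{k_0}$ connected, whence $x_0$ and $d_{k_0+1}$ automatically lie in one component and can be joined inside it. But the definition of a cut used in the paper (following N\"akki) only requires that $D^{\,\prime}\setminus\sigma$ have \emph{more than one} component: $\sigma$ is a continuous, not necessarily injective, $(n-1)$-surface, and already for $n=2$ a non-injective arc running from boundary to boundary with a self-intersection produces three complementary components. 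Nothing in the chain axioms prevents $x_0$ from sitting in a ``pocket'' component of $D^{\,\prime}\setminus\sigma^{g}_{k_0}$ different from the one containing $d_1\supset d_{k_0+1}$, in which case your arc $\widetilde{\alpha}\subset V_{k_0}$ need not exist. Everything else --- the choice of $M_0$ from convergence to the prime ends, the inner arcs inside the connected open sets $d_{k_0+1}$ and $g_{k_0+1}$, and the observation $\overline{g_{k_0}}\cap D^{\,\prime}\subset g_{k_0}\cup\sigma^{g}_{k_0}\subset g_1$, which settles property 2) for those inner arcs --- is correct.

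This gap also explains why the lemma claims only the existence of \emph{some} $k_0$, not the property for every $k_0\geqslant 2$ that you are in effect trying to prove. The standard repair is to choose $k_0$ \emph{after} the outer arc. Build one curve $\alpha$ issuing from $x_0$ that visits points $x_{m(1)}\in d_1$, $x_{m(2)}\in d_2,\dots$, joining consecutive ones inside $d_k$ (each $d_k$ is a domain, hence path-connected). Its initial compact piece, up to the first arrival in $d_1$, misses $\overline{g_k}$ for all large $k$: indeed $\overline{g_k}\cap D^{\,\prime}\subset g_k\cup\sigma^{g}_k\subset g_{k-1}$ and $\bigcap_k g_k=\varnothing$, so the decreasing compact sets $\overline{g_k}\cap |\alpha|_{[0,s_1]}$ are eventually empty; the remaining part of $\alpha$ lies in $d_1$, which is disjoint from $g_1\supset\overline{g_{k}}\cap D^{\,\prime}$ for $k\geqslant 2$. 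Taking $k_0$ this large (and symmetrically for $y_0$, then the maximum of the two) and truncating $\alpha$ at its entry into $d_{k_0+1}$ yields $\widetilde{\alpha}$ with no hypothesis on the number of complementary components of a cut; the rest of your construction then goes through verbatim.
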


\medskip
Наступне твердження доведено в~\cite[лема~2.2]{SSI$_1$} для випадку
гомеоморфізмів.

\medskip
\begin{lemma}\label{lem4}
{\sl Нехай $D$ і $D^{\,\prime}$ -- області в ${\Bbb R}^n,$
$n\geqslant 2,$  область $D^{\,\prime}$ є регулярною, і нехай $f$ --
відкрите, дискретне і замкнене відображення області $D$ на
$D^{\,\prime},$ яке задовольняє умову~(\ref{eq2*A}) в кожній точці
$y_0\in\overline{D^{\,\prime}}$ і з деякою функцією $Q\in
L^1(D^{\,\prime}).$ Нехай також $d_m$ -- послідовність спадних
областей, які відповідають ланцюгу розрізів $\sigma_m,$
$m=1,2,\ldots, $ що лежать на сферах $S(\overline{x_0}, r_m)$ і
таких, що $\overline{x_0}\in
\partial D^{\,\prime},$ причому $r_m\rightarrow 0$ при $m\rightarrow\infty.$  Тоді в умовах і
позначеннях леми~\ref{lem1} можна обрати номер $k_0\in {\Bbb N},$
для якого існує $0<N=N(k_0, Q, D^{\,\prime})<\infty,$ незалежне від
$m$ і $f,$ таке що
$$M(\Gamma_m)\leqslant N,\qquad m\geqslant M_0=M_0(k_0)\,,$$
де $\Gamma_m$ -- сім'я кривих $\gamma:[0, 1]\rightarrow D$ в області
$D$ таких, що $f(\gamma)\in \Gamma(|\gamma_{1, m}|, |\gamma_{2, m}|,
D^{\,\prime}).$ }
\end{lemma}
\begin{proof} 
Нехай $k_0$ -- довільний номер, для якого виконується твердження
леми~\ref{lem1}. За означенням кривої $\gamma_{1, m}$ і сім'ї
$\Gamma_m$ ми можемо записати
\begin{equation}\label{eq7A}
\Gamma_m=\Gamma_m^1\cup \Gamma_m^2\,,
\end{equation}
де $\Gamma_m^1$ -- сім'я кривих $\gamma\in\Gamma_m$ таких, що
$f(\gamma)\in \Gamma(|\widetilde{\alpha}|, |\gamma_{2, m}|,
D^{\,\prime})$ і $\Gamma_m^2$ -- сім'я кривих $\gamma\in\Gamma_m$
таких, що $f(\gamma)\in \Gamma(|\widetilde{\alpha}_m|, |\gamma_{2,
m}|, D^{\,\prime}).$

\medskip
Враховуючи позначення леми~\ref{lem1}, покладемо
$$\varepsilon_0:=\min\{{\rm dist}\,(|\widetilde{\alpha}|, \overline{g_{k_0}}),
{\rm dist}\,(|\widetilde{\alpha}|, |\widetilde{\beta}|)\}>0\,.$$
Розглянемо тепер покриття множини $|\widetilde{\alpha}|$ наступного
вигляду: $\bigcup\limits_{x\in |\widetilde{\alpha}|}B(x,
\varepsilon_0/4).$ Оскільки $|\widetilde{\alpha}|$ є компактом в
$D^{\,\prime},$ знайдуться номери $i_1,\ldots, i_{N_0}$ такі, що
$|\widetilde{\alpha}|\subset \bigcup\limits_{i=1}^{N_0} B(z_i,
\varepsilon_0/4),$ де $z_i\in |\widetilde{\alpha}|$ при $1\leqslant
i\leqslant N_0.$ З огляду на~\cite[теорема~1.I.5.46]{Ku} легко
переконатися в тому, що
\begin{equation}\label{eq5A}
\Gamma(|\widetilde{\alpha}|, |\gamma_{2, m}|,
D^{\,\prime})>\bigcup\limits_{i=1}^{N_0} \Gamma(S(z_i,
\varepsilon_0/4), S(z_i, \varepsilon_0/2), A(z_i, \varepsilon_0/4,
\varepsilon_0/2))\,.
\end{equation}
Зафіксуємо $\gamma\in \Gamma_m^1,$ $\gamma:[0, 1]\rightarrow D,$
$\gamma(0)\in |\widetilde{\alpha}|,$ $\gamma(1)\in |\gamma_{2, m}|.$
Зі співвідношення~(\ref{eq5A}) випливає, що $f(\gamma)$ має підкриву
$f(\gamma)_1:=f(\gamma)|_{[p_1, p_2]}$ таку, що $$f(\gamma)_1\in
\Gamma(S(z_i, \varepsilon_0/4), S(z_i, \varepsilon_0/2), A(z_i,
\varepsilon_0/4,  \varepsilon_0/2))$$ при деякому $1\leqslant
i\leqslant N_0.$ Тоді $\gamma|_{[p_1, p_2]}$ є такою кривою, яка з
одного боку є підкривою $\gamma,$ а з іншого, належить до
сім'ї~$\Gamma_f(z_i, \varepsilon_0/4, \varepsilon_0/2),$ бо
$$f(\gamma|_{[p_1, p_2]})=f(\gamma)|_{[p_1, p_2]}\in\Gamma(S(z_i,
\varepsilon_0/4), S(z_i, \varepsilon_0/2), A(z_i, \varepsilon_0/4,
\varepsilon_0/2)).$$ Тим самим
\begin{equation}\label{eq6}
\Gamma_m^1>\bigcup\limits_{i=1}^{N_0} \Gamma_f(z_i, \varepsilon_0/4,
\varepsilon_0/2)\,.
\end{equation}
Покладемо
$$\eta(t)= \left\{
\begin{array}{rr}
4/\varepsilon_0, & t\in [\varepsilon_0/4, \varepsilon_0/2],\\
0,  &  t\not\in [\varepsilon_0/4, \varepsilon_0/2]
\end{array}
\right. \,.$$
Зауважимо, що функція~$\eta$ задовольняє
співвідношення~(\ref{eqA2}). Тоді, за визначенням відображення $f$
у~(\ref{eq2*A}), а також за співвідношенням~(\ref{eq6}) і з огляду
на напівадитивність модуля сімей кривих
(див.~\cite[теорема~6.2]{Va}), ми отримаємо, що
\begin{equation}\label{eq1}
M(\Gamma_m^1)\leqslant \sum\limits_{i=1}^{N_0} M(\Gamma_f(z_i,
\varepsilon_0/4, \varepsilon_0/2))\leqslant \sum\limits_{i=1}^{N_0}
\frac{N_04^n\Vert Q\Vert_1}{\varepsilon^n_0}\,,\qquad m\geqslant
M_0\,,
\end{equation}
where $\Vert Q\Vert_1=\int\limits_{D^{\,\prime}}Q(x)\,dm(x).$
Далі, по~\cite[теорема~1.I.5.46]{Ku} ми отримаємо, що
$$\Gamma_m^2>\Gamma_f(\overline{x_0}, r_{k_0+1}, r_{k_0})\,.$$
Міркуючи так, як і вище, покладемо
$$\eta(t)= \left\{
\begin{array}{rr}
1/(r_{k_0}-r_{k_0+1}), & t\in [r_{k_0+1}, r_{k_0}],\\
0,  &  t\not\in [r_{k_0+1}, r_{k_0}]
\end{array}
\right. \,.$$
Тоді з останнього співвідношення випливає, що
\begin{equation}\label{eq4D}
M(\Gamma_m^2)\leqslant \frac{\Vert
Q\Vert_1}{(r_{k_0}-r_{k_0+1})^n}\,, m\geqslant M_0\,.
\end{equation}
Отже, з~(\ref{eq7A}), (\ref{eq1}) і~(\ref{eq4D}), з огляду на
напівадитивність модуля сімей кривих, випливає, що
$$M(\Gamma_m)\leqslant
\left(\frac{N_04^n}{\varepsilon^n_0}+\frac{1}{(r_{k_0}-r_{k_0+1})^n}\right)\Vert
Q\Vert_1\,,\quad m\geqslant M_0\,.$$
Права частина останнього співвідношення не залежить від~$m,$ так що
ми можемо покласти
$N:=\left(\frac{N_04^n}{\varepsilon^n_0}+\frac{1}{(r_{k_0}-r_{k_0+1})^n}\right)\Vert
Q\Vert_1.$ Лему~\ref{lem4} повністю доведено.~$\Box$
\end{proof}

\medskip
{\bf 4. Доведення теореми~\ref{th2}.} Можливість неперервного
продовження відображення~$f\in {\frak S}_{\delta, A, Q }(D,
D^{\,\prime})$ на межу області $D$ є результатом теореми~\ref{th3}.
Одностайна неперервність сім'ї відображень~${\frak S}_{\delta, A, Q
}(D, D^{\,\prime})$ у внутрішніх точках області~$D$ є результатом
роботи~\cite[теорема~1.1]{SSD}.

\medskip
Покажемо одностайну неперервність сім'ї~${\frak S}_{\delta, A, Q
}(\overline{D}, \overline{D^{\,\prime}})$ на $\partial D.$
Припустимо протилежне. Тоді знайдуться точка $z_0\in \partial D,$
число~$\varepsilon_0>0,$ послідовність $z_m\in \overline{D}$ і
відображення $\overline{f}_m\in {\frak S}_{\delta, A, Q
}(\overline{D}, \overline{D^{\,\prime}})$ такі, що $z_m\rightarrow
z_0$ при $m\rightarrow\infty,$ при цьому,
\begin{equation}\label{eq12}
\rho(\overline{f}_m(z_m),
\overline{f}_m(z_0))\geqslant\varepsilon_0,\quad m=1,2,\ldots ,
\end{equation}
де $\rho$  -- одна з можливих метрик в~$\overline{D^{\,\prime}}_P,$
яку визначено за формулою типу~(\ref{eq1A}). Оскільки
$f_m=\overline{f}_m|_{D}$ продовжується по неперервності на
$\overline{D},$ ми можемо вважати, що $z_m\in D$ і, крім того,
знайдеться ще одна послідовність $z^{\,\prime}_m\in D,$
$z^{\,\prime}_m\rightarrow z_0$ при $m\rightarrow\infty,$ така що
$\rho(f_m(z^{\,\prime}_m), \overline{f}_m(z_0))\rightarrow 0$ при
$m\rightarrow\infty.$ В такому випадку, з~(\ref{eq12}) випливає, що
\begin{equation}\label{eq13}
\rho(f_m(z_m), f_m(z^{\,\prime}_m))\geqslant\varepsilon_0/2,\quad
m\geqslant m_0\,.
\end{equation}
Оскільки область~$D^{\,\prime}$ є регулярною, метричний
простір~$\overline{D^{\,\prime}}_P$ є компактним. Отже, можна
вважати, що послідовності~$f_m(z_m)$ і $f_m(z_m^{\,\prime})$
збігаються при $m\rightarrow\infty$ до деяких елементів~$P_1, P_2\in
\overline{D^{\,\prime}}_P,$ $P_1\ne P_2.$ Нехай $d_m$ і $g_m$ --
послідовності спадних областей, які відповідають простим кінцям
$P_1$ і $P_2,$ відповідно. З огляду на~\cite[лема~3.1]{IS}, див.
також~\cite[лема~1]{KR$_2$}, можна вважати, що послідовність
розрізів $\sigma_m,$ яка відповідає областям~$d_m,$ $m=1,2,\ldots, $
лежить на сферах~$S(\overline{x_0}, r_m),$ де $\overline{x_0}\in
\partial D^{\,\prime}$ і $r_m\rightarrow 0$ при $m\rightarrow\infty.$
Оберемо $x_0, y_0\in A$ так, що $x_0\ne y_0$ і $x_0\ne P_1\ne y_0,$
де континуум~$A\subset D^{\,\prime}$ взятий з умов
теореми~\ref{th2}. Без обмеження загальності, можна вважати, що
$d_1\cap g_1=\varnothing$ і $x_0, y_0\not\in d_1\cup g_1.$

\medskip
По лемам~\ref{lem1} і~\ref{lem4} знайдуться непересічні криві
$\gamma_{1,m}:[0, 1]\rightarrow D^{\,\prime}$ і $\gamma_{2,m}:[0,
1]\rightarrow D^{\,\prime},$ номер $M_0=M_0(k_0)>0$ і число $N> 0$
такі, що $\gamma_{1, m}(0)=x_0,$ $\gamma_{1, m}(1)=f_m(z_m),$
$\gamma_{2, m}(0)=y_0,$ $\gamma_{2, m}(0)=f_m(z^{\,\prime}_m),$
причому
\begin{equation}\label{eq15}
M(\Gamma_m)\leqslant N\,, m\geqslant M_0\,,
\end{equation}
де $\Gamma_m$ складається з тих і тільки тих кривих $\gamma$ в $D,$
для яких $f_m(\gamma)\in\Gamma(|\gamma_{1, m}|, |\gamma_{2, m}|,
D^{\,\prime})$ (див. малюнок~\ref{fig6}).
\begin{figure}
  \centering\includegraphics[width=350pt]{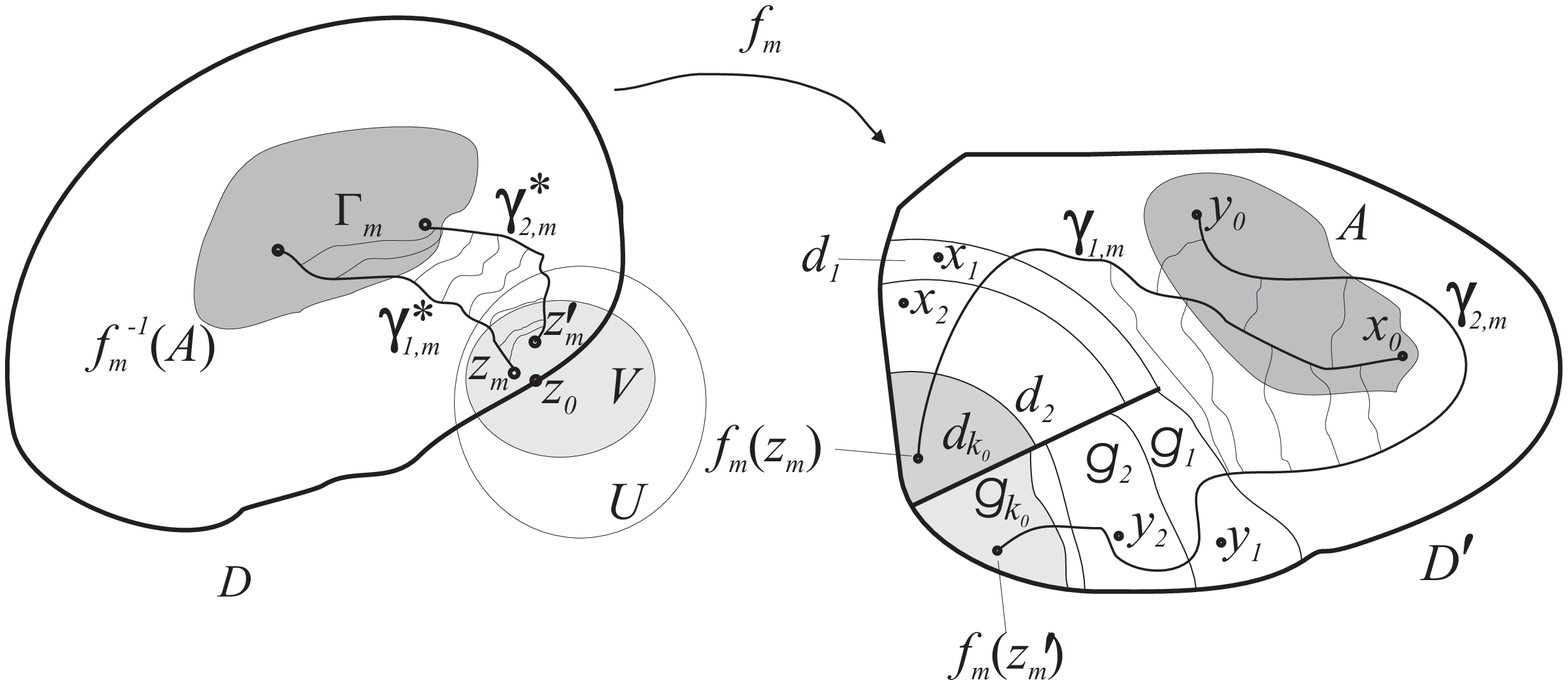}
  \caption{До доведення теореми~\ref{th2}.}\label{fig6}
 \end{figure}
З іншого боку, нехай~$\gamma^*_{1,m}$ і~$\gamma^*_{2,m}$ -- повні
підняття кривих~$\gamma_{1,m}$ і~$\gamma_{2,m}$ при відображенні
$f_m$ з початками в точках~$z_m$ і $z^{\,\prime}_m,$ відповідно
(такі підняття існують за~\cite[лема~3.7]{Vu$_1$}). Тоді
$\gamma^*_{1,m}(1)\in f^{\,-1}_m(A)$ і $\gamma^*_{2,m}(1)\in
f^{\,-1}_m(A)$ і, оскільки за умовою $h(f^{\,-1}_{m}(A), \partial
D)>\delta>0,$ $m=1,2,\ldots \,,$ ми будемо мати, що
$$h(|\gamma^*_{1, m}|)\geqslant h(z_m, \gamma^*_{1,m}(1)) \geqslant
(1/2)\cdot h(f^{\,-1}_m(A), \partial D)>\delta/2\,,$$
\begin{equation}\label{eq14}
h(|\gamma^*_{2, m}|)\geqslant h(z^{\,\prime}_m, \gamma^*_{2,m}(1))
\geqslant (1/2)\cdot h(f^{\,-1}_m(A), \partial D)>\delta/2
\end{equation}
для достатньо великих $m\in {\Bbb N}.$
Оберемо кулю $U:=B_h(z_0, r_0)=\{z\in\overline{{\Bbb R}^n}: h(z,
z_0)<r_0\},$ де $r_0>0$ і $r_0<\delta/4.$ Зауважимо, що
$|\gamma^*_{1, m}|\cap U\ne\varnothing\ne |\gamma^*_{1, m}|\cap
(D\setminus U)$ для достатньо великих $m\in{\Bbb N},$ оскільки
$h(f_m(|\gamma_{1, m}|))\geqslant \delta/2$ і $z_m\in|\gamma^*_{1,
m}|,$ $z_m\rightarrow z_0$ при $m\rightarrow\infty.$ Міркуючи
аналогічно, можна зробити висновок, що~$|\gamma^*_{2, m}|\cap
U\ne\varnothing\ne |\gamma^*_{2, m}|\cap (D\setminus U).$ Оскільки
$|\gamma^*_{1, m}|$ і $|\gamma^*_{2, m}|$ є континуумами, з огляду
на~\cite[теорема~1.I.5.46]{Ku}
\begin{equation}\label{eq8AA}
|\gamma^*_{1, m}|\cap \partial U\ne\varnothing, \quad |\gamma^*_{2,
m}|\cap
\partial U\ne\varnothing\,.
\end{equation}
Зафіксуємо $P:=N>0,$ де $N$ -- число зі співвідношення~(\ref{eq15}).
Оскільки межа області $D$ є слабо плоскою, знайдеться окіл $V\subset
U$ точки $z_0,$ такий що для будь-яких континуумів $E, F\subset D$ з
умовами $E\cap
\partial U\ne\varnothing\ne E\cap \partial V$ і $F\cap \partial
U\ne\varnothing\ne F\cap \partial V$ виконано нерівність
\begin{equation}\label{eq9AA}
M(\Gamma(E, F, D))>N\,.
\end{equation}
Зауважимо, що для достатньо великих $m\in {\Bbb N}$
\begin{equation}\label{eq10AA}
|\gamma^*_{1, m}|\cap \partial V\ne\varnothing, \quad |\gamma^*_{2,
m}|\cap
\partial V\ne\varnothing\,.\end{equation}
Дійсно, $z_m\in |\gamma^*_{1, m}|$ і $z^{\,\prime}_m\in
|\gamma^*_{2, m}|,$ де $z_m, z^{\,\prime}_m\rightarrow z_0\in V$ при
$m\rightarrow\infty.$ Отже, $|\gamma^*_{1, m}|\cap
V\ne\varnothing\ne |\gamma^*_{2, m}|\cap V$ для достатньо великих
$m\in {\Bbb N}.$ Крім того, $h(V)\leqslant h(U)=2r_0<\delta/2$ і,
оскільки по~(\ref{eq14}) $h(|\gamma^*_{1, m}|)>\delta/2,$ то
$|\gamma^*_{1, m}|\cap (D\setminus V)\ne\varnothing.$ Тоді
$|\gamma^*_{1, m}|\cap\partial V\ne\varnothing$
(див.~\cite[теорема~1.I.5.46]{Ku}). Аналогічно, $h(V)\leqslant
h(U)=2r_0<\delta/2$ і, оскільки по~(\ref{eq14}) $h(|\gamma^*_{2,
m}|)>\delta/2,$ то $|\gamma^*_{2, m}|\cap (D\setminus
V)\ne\varnothing.$ По~\cite[теорема~1.I.5.46]{Ku} ми отримаємо, що
$|\gamma^*_{1, m}|\cap\partial V\ne\varnothing.$ Отже,
(\ref{eq10AA}) встановлено. З огляду на~(\ref{eq9AA}), (\ref{eq8AA})
і (\ref{eq10AA}), ми отримаємо, що
\begin{equation}\label{eq6a}
M(\Gamma(|\gamma^*_{1, m}|, |\gamma^*_{2, m}|, D))>N\,.
\end{equation}
Нерівність~(\ref{eq6a}) суперечить~(\ref{eq15}), бо
$\Gamma(|\gamma^*_{1, m}|, |\gamma^*_{2, m}|, D)\subset \Gamma_m,$
отже,
$$M(\Gamma(|\gamma^*_{1, m}|, |\gamma^*_{2, m}|, D))
\leqslant M(\Gamma_m)\leqslant N\,.$$
Отримана суперечність вказує на невірність вихідного
припущення~(\ref{eq12}). Теорему доведено.~$\Box$

\medskip
{\bf 5. Деякі приклади.}

\begin{example}\label{ex1}
Отримаємо спочатку відображення, яке задовольняє умови і висновок
теореми~\ref{th3}. По-перше, розглянемо випадок, коли це
відображення є гомеоморфізмом, а функція $Q$ є обмеженою. Для
спрощення розглянемо плоский випадок. Нехай $D^{\,\prime}$ --
одиничний квадрат з викинутими відрізками $I_k=\{z=(x, y)\in {\Bbb
R}^2: x=1/k,\,\,0<y<1/2\},$ $k=2,3,\ldots,$ (див.
малюнок~\ref{fig7}).
\begin{figure}[h]
\centerline{\includegraphics[scale=0.5]{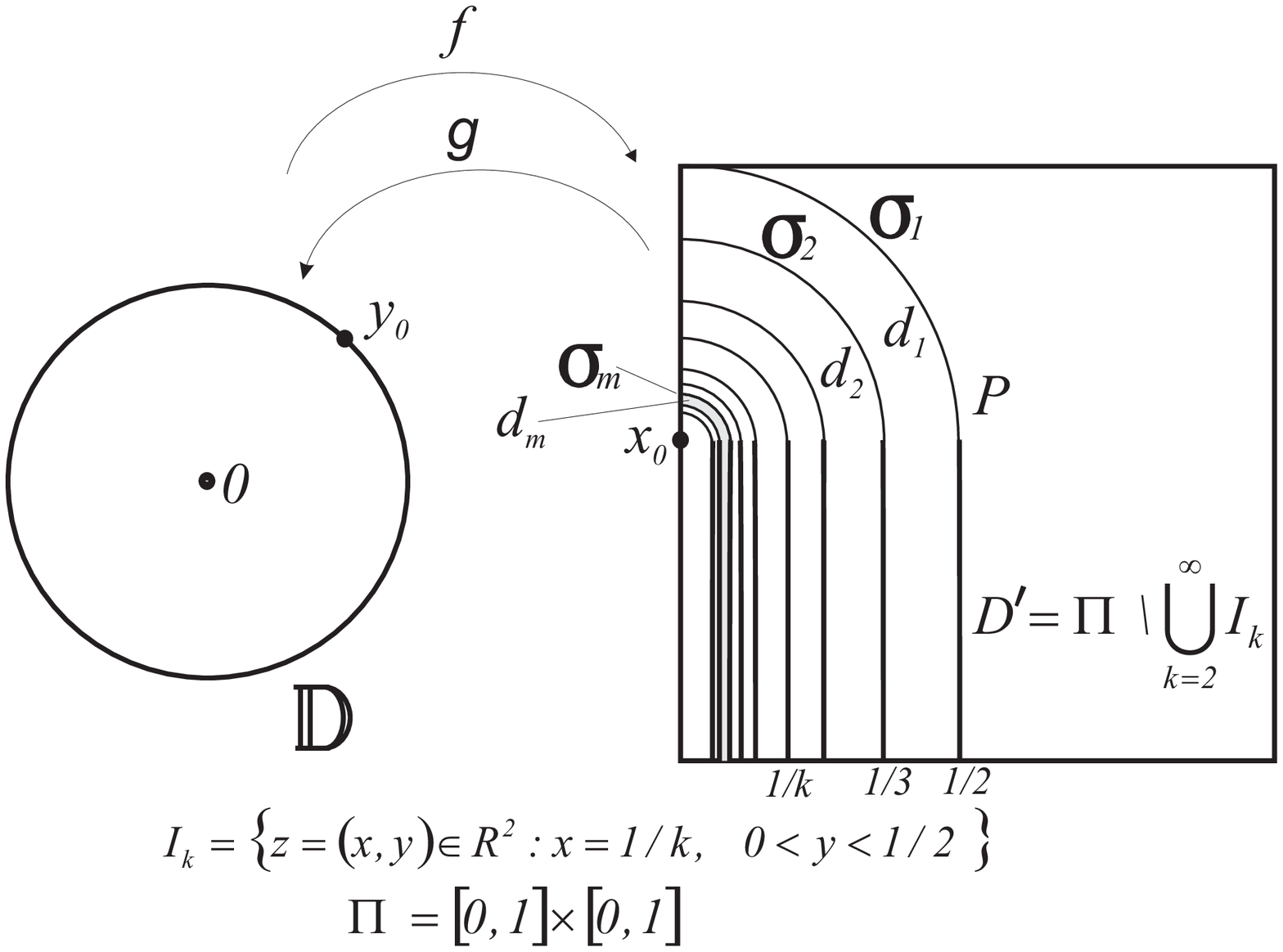}}
\caption{Ілюстрація до прикладу~\ref{ex1}}\label{fig7}
\end{figure}
Розглянемо простий кінець $P$ області $D^{\,\prime},$ створений за
допомогою розрізів
$$\sigma_m=\left\{z=x_0+\frac{e^{i\varphi}}{m+1},\,\, x_0=(0, 1/2),\,\, 0\leqslant
\varphi\leqslant \pi/2\right\}, \quad m=1,2,\ldots, .$$
Можна показати, що кінець $P$ дійсно є простим. За теоремою Рімана
про відображення, існує конформне відображення $g$ одиничного круга
${\Bbb D}=\{z\in {\Bbb C}: |z|<1\}$ на область $D^{\,\prime},$ крім
того, за теоремою Каратеодорі простому кінцю $P$ відповідає деяка
точка~$y_0\in\partial {\Bbb D}$ така, що $C(f, y_0)=I(P),$
$f=g^{\,-1},$ див.~\cite[теорема~9.4]{CL}. Отже, можна обрати
принаймні дві послідовності $z_k, w_k\in D^{\,\prime},$
$k=1,2,\ldots ,$ такі що $z_k, w_k\rightarrow P,$ $z_k\rightarrow
z_0$ і $w_k\rightarrow w_0$ при $k\rightarrow\infty,$ $z_0\ne w_0,$
причому $f(z_k)\rightarrow y_0$ і $f(w_k)\rightarrow y_0$ при
$k\rightarrow\infty.$ В цьому випадку, відображення $f:=g^{\,-1}$ не
має неперервного продовження в точку $y_0$ в поточковому сенсі, але
$g$ має неперервне продовження $\overline{g}:\overline{{\Bbb
D}}\rightarrow \overline{D^{\,\prime}}_P.$

\medskip
Оскільки $g$ -- конформне відображення, воно задовольняє
співвідношення~(\ref{eq2*A}) при $Q\equiv 1$ (див., напр.,
\cite[теорема~3.2]{MRV$_1$}).

\medskip
Зауважимо, що відображення $f$ задовольняє всі умови і висновок
теореми~\ref{th3}. Область ${\Bbb D}$ має слабо плоску межу (див.,
напр., \cite[теореми~17.10 і 17.12]{Va}), а область $D^{\,\prime}$ є
регулярною за означенням, крім того, функція $Q\equiv 1$ є
інтегровною в $D^{\,\prime}.$
\end{example}

\medskip
\begin{example}\label{ex4}
Для того, щоб тепер отримати аналогічне відображення з розгалуженням
у~(\ref{eq2*A}), покладемо
$$f_1(z)=(f\circ g)(z)\,,$$
де $g(z)=z^2.$ Зауважимо, що $K_O(z, f_1)=1$ і $N(f_1, {\Bbb
D})=2\,,$ тому $f_1$ також задовольняє співвідношення~(\ref{eq2*A})
з $Q\equiv 2.$ Знову таки, $f_1$ задовольняє всі умови
теореми~\ref{th3}.
\end{example}

\medskip
\begin{example}\label{ex3}
На основі прикладів~\ref{ex1} і~\ref{ex4} побудуємо тепер
відображення з розгалуженням, яке має необмежену характеристику, і
яке задовольняє всі умови і висновок теореми~\ref{th3}. Розглянемо
наступну конструкцію: нехай
$\varphi_1(z)=\frac{1}{e\sqrt{2}}(z-(1/2, 1/2)),$ $z\in
D^{\,\prime},$ тоді $\varphi$ переводить $D^{\,\prime}$ в деяку
область $D^{\,\prime\prime},$ що повністю лежить в крузі $B(0,
1/e).$ Цю область $D^{\,\prime\prime}$ перетворимо не деку іншу
однозв'язну область $D^{\,\prime\prime\prime}$ за допомогою
гомеоморфізму $\varphi_2(z)=\frac{z}{|z|\log\frac{1}{|z|}},$
$\varphi_2(0):=0.$ Тепер, цю область $D^{\,\prime\prime\prime}$
перетворимо за допомогою деякого конформного відображення
$\varphi_3$ на одиничний круг ${\Bbb D}$ (таке конформне
відображення існує завдяки теоремі Рімана). Нарешті, в ${\Bbb D}$
покладемо $\varphi_4(z)=z^2.$ Тепер розглянемо наступне відображення
\begin{equation}\label{eq1B}
F(z)=(\varphi^{\,-1}_1\circ\varphi^{\,-1}_2\circ\varphi^{\,-1}_3\circ\varphi_4)(z)\,.
\end{equation}
Тепер окремо розглянемо
$F_1(z)=(\varphi^{\,-1}_1\circ\varphi^{\,-1}_2)(z)$ і
$F_2(z)=(\varphi^{\,-1}_3\circ\varphi_4)(z).$ Передусім зауважимо,
що $K_O(F_1, z)=K_O(\varphi^{\,-1}_2, z),$ оскільки відображення
$\varphi^{\,-1}_1$ є конформним. Використовуючи техніку, застосовану
при розгляді~\cite[Proposition~6.3]{MRSY}, можна встановити, що
$\varphi^{\,-1}_2=\frac{z}{|z|}e^{-\frac{1}{|z|}},$ причому
$K_O(F_1, z)=K_O(\varphi^{\,-1}_2, z)=\frac{1}{|z|}.$ Тоді
$$K_O(F_1^{\,-1}(z), F_1)=K_O((\varphi_2\circ\varphi_1)(z), F_1)\,.$$
Маємо:
$$K_O((\varphi_2\circ\varphi_1)(z), F_1)=\frac{1}{|z|}\left|_{z\mapsto\frac{z-(1/2, 1/2)}{|z-(1/2, 1/2)|
\log\frac{e\sqrt{2}}{|z-(1/2, 1/2)|}}}\right.
=\log\frac{e\sqrt{2}}{|z-(1/2, 1/2)|}\,.$$
Зауважимо, що $F_1$ є відображенням класу $C^1$ в ${\Bbb
D}\setminus\{0\},$ крім того, якобіан $|J(z, f)|=|z|^3\cdot
e^{2/|z|}$ є локально обмеженим в ${\Bbb D}\setminus\{0\}.$ В такому
випадку, за наслідком~8.5 в~\cite{MRSY} відображення $F_1$ є
відображенням зі скінченним спотворенням довжини в ${\Bbb
D}\setminus\{0\}.$ Отже, за~\cite[теорема~8.5]{MRSY} відображення
$F_1$ задовольняє співвідношення
\begin{equation}\label{eq11A}
M(\Gamma^{\,\prime\prime})\leqslant
\int\limits_{D^{\,\prime}}Q(z)\cdot \rho_*(z)\,dm(z)
\end{equation}
для будь-якої сім'ї $\Gamma^{\,\prime\prime}$ локально спрямлюваних
кривих $\gamma$ в області $D^{\,\prime\prime}$ і будь-якої функції
$\rho_*\in {\rm adm}\,F_1(\Gamma^{\,\prime\prime}),$ де
$Q(z)=\log\frac{e\sqrt{2}}{|z-(1/2, 1/2)|}.$

\medskip
З іншого боку, відображення $F_2$ задовольняє співвідношення
\begin{equation}\label{eq2}
M(\Gamma)\leqslant 2\cdot M(F_2(\Gamma))\,,
\end{equation}
оскільки $N(F_2, {\Bbb D})=2$ і $K_O(F_2, z)=1$ (див.
\cite[теорема~3.2]{MRV$_1$}). Тоді об'єднуючи~(\ref{eq11A})
і~(\ref{eq2}), будемо мати
\begin{equation}\label{eq2A}
M(\Gamma)\leqslant \int\limits_{D^{\,\prime}}2Q(z)\cdot
\rho_*(z)\,dm(z)
\end{equation}
для будь-якої сім'ї $\Gamma$ локально спрямлюваних кривих $\gamma$ в
${\Bbb D}$ і будь-якої функції $\rho_*\in {\rm
adm}\,F_1(F_2(\Gamma))={\rm adm}\,F(\Gamma),$ де
$Q(z)=\log\frac{e\sqrt{2}}{|z-(1/2, 1/2)|}.$ Зауважимо, що функція
$Q(z)=\log\frac{e\sqrt{2}}{|z-(1/2, 1/2)|}$ є інтегровною в області
$D^{\,\prime}.$ Також зауважимо, що нерівність~(\ref{eq2A}) є
частковим випадком співвідношення~(\ref{eq2*A}), оскільки в
(\ref{eq2A}) сім'я кривих є будь-якою, отже, замість $\Gamma$ можна
взяти $\Gamma_f(y_0, r_1, r_2)$ як окремий випадок. Крім того, ми
можемо також покласти в~(\ref{eq2A}) $\rho_*(z)=\eta(|z-y_0|)$ при
$r_1<|z-y_0|<r_2$ і $z\in D^{\,\prime},$ $\rho_*(z)$ в інших
випадках. Якщо $\eta$ задовольняє~(\ref{eqA2}), то можна показати,
що $\rho_*(z)$ задовольняє~(\ref{eq2A}) для $\Gamma=\Gamma_f(y_0,
r_1, r_2)$ (див.~\cite[теорема~5.7]{Va}).

\medskip
Отже, всі умови теореми~\ref{th3} виконуються; відображення $F$
задовольняє всі умови цієї теореми і за цією теоремою продовжується
до відображення $\overline{F}:\overline{{\Bbb
D}}\rightarrow\overline{D^{\,\prime}}_P.$
\end{example}

\medskip
\begin{example}\label{ex2} Тепер побудуємо приклад, стосовний
теореми~\ref{th2}. Як відомо, дробово-лінійні автоморфізми
одиничного круга мають вигляд
$$f(z)=e^{i\theta}\frac{z-a}{1-\overline{a}z}, \quad z\in {\Bbb
D},\quad a\in{\Bbb D},\quad\theta\in [0, 2\pi)\,.$$
Покладемо $\theta=0,$ $a=1/n,$ $n=1,2,\ldots .$ В цьому випадку,
розглянемо сім'ю відображень
$\widetilde{f}_n(z)=\frac{z-1/n}{1-z/n}=\frac{nz-1}{n-z}.$ Нехай
$\widetilde{A}=[0, 1/2].$ Нехай $t\in [0, 1/2],$ тоді
$\widetilde{f}_n(t)=\frac{t-1/n}{1-t/n}.$ Оскільки похідна
$\widetilde{f}^{\,\prime}_n(t)=\frac{1-\frac{1}{n^2}}{(1-\frac
tn)^2}$ невід'ємна всюди, найменше значення функції
$\widetilde{f}_n(t)$ на $A$ буде точка $-1/n,$ найбільша --
$\frac{1/2-1/n}{1-1/2n}\rightarrow 1/2$ при $n\rightarrow\infty.$
Звідси існує $\delta>0$ таке, що $h(\widetilde{f}_n(\widetilde{A}),
\partial {\Bbb D})>\delta>0.$
Покладемо тепер $f_n:=\widetilde{f}^{\,-1}_n,$ і нехай
$A=F(\widetilde{A}),$ де $F$ -- відображення з прикладу~\ref{ex3}
(див. співвідношення~(\ref{eq1B})). Тоді сім'я відображень
$F_n:=F\circ f_n$ задовольняє всі умови і висновок
теореми~\ref{th2}. Зауважимо, що кожне з відображень $F_n$ не має
навіть неперервного евклідового продовження на одиничне коло, але
має це продовження як відображення $\overline{F_n}:\overline{{\Bbb
D}}\rightarrow \overline{D^{\,\prime}}_P.$ Більше того, сім'я
відображень $\{F_n\}_{n=1}^{\infty}$ є одностайно неперервною в
$\overline{D}.$
\end{example}


КОНТАКТНА ІНФОРМАЦІЯ

\medskip
\noindent{{\bf Євген Олександрович Севостьянов} \\
{\bf 1.} Житомирський державний університет ім.\ І.~Франко\\
кафедра математичного аналізу, вул. Велика Бердичівська, 40 \\
м.~Житомир, Україна, 10 008 \\
{\bf 2.} Інститут прикладної математики і механіки
НАН України, \\
вул.~Добровольського, 1 \\
м.~Слов'янськ, Україна, 84 100\\
e-mail: esevostyanov2009@gmail.com}

\end{document}